\let\TeXchi\chi
\newbox\chibox
\chibox \hbox{\raise\dp0 \box 0 }
\def\chi{\copy\chibox}
\newtheorem{proposition}{Proposition}[section]
\newtheorem{theorem}{Theorem}[section]
\newtheorem{definition}{Definition}[section]
\newtheorem{remark}{Remark}[section]
\newtheorem{conjecture}{Conjecture}[section]
\numberwithin{equation}{section}
\numberwithin{theorem}{section}
\numberwithin{definition}{section}
\numberwithin{example}{section}
\numberwithin{proposition}{section}
\numberwithin{lemma}{section}
\numberwithin{remark}{section}
\DeclareMathOperator{\Ima}{Im}
\DeclareMathOperator{\Dom}{Dom}
\DeclareMathOperator{\Spec}{Spec}
\DeclareMathOperator{\Hom}{Hom}
\DeclareMathOperator{\id}{id}
\DeclareMathOperator{\Ob}{Ob}
\DeclareMathOperator{\Inj}{Inj}
\DeclareMathOperator{\Ind}{Ind}
\newcommand\blfootnote[1]{%
  \begingroup
  \renewcommand\thefootnote{}\footnote{#1}%
  \addtocounter{footnote}{-1}%
  \endgroup
}
\begin{document}
\title{Structured spaces: categories, sheaves, bundles, schemes and cohomology theories}
\author
{Manuel Norman}
\date{}
\maketitle
\begin{abstract}
\noindent In [1] we introduced the concept of structured space, which is a topological space that locally resembles some algebraic structures. In [2] we proceeded the study of these spaces, developing two cohomology theories. The aim of this paper is to define categories of structured spaces, (pre)sheaves with values in such categories, and generalised notions of vector bundles, ringed spaces and schemes. Then, we will construct (using some techniques and also the general method in Section 2 of [2]) various cohomology theories for these objects.
\end{abstract}
\blfootnote{Author: \textbf{Manuel Norman}; email: manuel.norman02@gmail.com\\
\textbf{AMS Subject Classification (2010)}: 54B40, 55R25, 55N35\\
\textbf{Key Words}: structured space, sheaf, vector bundle, ringed space, scheme, cohomology}
\section{Introduction}
We briefly recall some important concepts that will be used throughout the paper.
\begin{definition}\label{Def:1.1}
A functor $\mathcal{F}:Op(X) \rightarrow \mathbf{C}$ (often written $\mathcal{F}:X \rightarrow \mathbf{C}$, tacitly assuming that the actual domain is the category of open subsets of $X$, and not $X$ itself), where $X$ is a topological space and $\mathbf{C}$ is some category, is called a presheaf (with values in $\mathbf{C}$) if it is defined as follows:\\
(i) For each open $U \subseteq X$, there is an object $\mathcal{F}(U) \in \mathbf{C}$\\
(ii) For each inclusion of open subsets of $X$, $U \subseteq V$, there corresponds a morphism $\rho_{U,V}: \mathcal{F}(V) \rightarrow \mathcal{F}(U)$ in the category $\mathbf{C}$.
\end{definition}
We notice that, by definition of functor, $\mathcal{F}$ satisfies the following properties:\\
1) For every open $U \subseteq X$, $\rho_{U,U} = \id_{\mathcal{F}(U)}$\\
2) If we have the inclusions of open subsets $U \subseteq V \subseteq W$, $\rho_{U,V} \circ \rho_{V,W} = \rho_{U,W}$\\
$\mathcal{F}(U)$ is called 'section of $\mathcal{F}$ over $U$' (for $U$ open).\\
A sheaf is a presheaf which satisfies some additional conditions:
\begin{definition}\label{Def:1.2}
A sheaf $\mathcal{F}:X \rightarrow \mathbf{C}$ is a presheaf which satisfies the following axioms:\\
1) (gluing) For every open cover $\lbrace U_i \rbrace_{i \in I}$ of any open subset $U \subseteq X$, and for every family $\lbrace s_i \rbrace_{i \in I}$ with $s_i \in \mathcal{F}(U_i)$, if:
$$ \rho_{U_i \cap U_j, U_i} (s_i) = \rho_{U_i \cap U_j, U_j} (s_j)$$
then there is some $s \in \mathcal{F}(U)$ such that $\rho_{U_i,U} (s) = s_i$ $\forall i \in I$;\\
2) (locality) For any $s,t \in \mathcal{F}(U)$, if
$$ \rho_{U_i,U}(s) =\rho_{U_i,U} (t), \quad \forall i \in I $$
then $s=t$.
\end{definition}
We define the functor $\Gamma(U, \mathcal{F}):= \mathcal{F}(U)$. Thanks to this functor, we can construct a sheaf starting from a presheaf (sheafification). Actually, there are various ways to sheafify a presheaf: here we will only consider the one below, but we however notice that, for instance, Artin, Grothendieck and Verdier showed in [3] another method to do this. The stalk $\mathcal{F}_x$ of a presheaf $\mathcal{F}$ on $X$ at $x \in X$ is the direct limit (see, for instance, [4-6] for some examples; see [7] for this particular situation) of the sections $\mathcal{F}(U)$ for $U$ containing $x$, i.e.:
$$ \mathcal{F}_x := \lim_{\longrightarrow} (\mathcal{F}(U))_{U \ni x} $$
We can now define the space 
$$S \mathcal{F}:= \coprod_{x \in X} \mathcal{F}_x$$
and endow it with with the disjoint union topology (we refer to [7] for the precise definition of this topology). This space is called the \'etale space of the presheaf. We indicated the disjoint union using $\coprod$ instead of $\bigsqcup$ to stress the fact that we are using the categorical dual of the product space construction (and since we are in the context of category theory, we prefer to use this symbol). Let $p: S \mathcal{F} \rightarrow X$ be the projection map defined by $p(\alpha):=x$ for all $\alpha \in \mathcal{F}_x$. The following $\widetilde{\mathcal{F}}$ is a sheaf:
$$ \widetilde{\mathcal{F}}:= \Gamma[S \mathcal{F}, p] $$
where $\Gamma[A,f]$ is the sheaf of continuous sections of $f$ on $B$, with $f:A \rightarrow B$ continuous and surjective, defined by (for every open $U \subseteq B$):
$$ \Gamma[A,f](U):= \lbrace g: U \rightarrow A \, | \, f \circ g = \id \, \text{and} \, g \, \text{is continuous} \rbrace $$
This sheaf is called the sheafification of $\mathcal{F}$. If $\mathcal{F}$ is a sheaf, $\widetilde{\mathcal{F}}$ is isomorphic to it. This construction is really useful in many situations. We refer to [13-15] and [56] for more on presheaves and sheaves.\\
We proceed recalling the definition of vector bundle, which is a fiber bundle where the fibers $\pi^{-1}(\lbrace x \rbrace)$ are vector spaces, and where a certain linearity condition is satisfied. The precise definition is as follows:
\begin{definition}\label{Def:1.3}
Consider $(E, X, \pi)$, where $E$ is the total space, $B$ is the base space, and they are all topological spaces; $\pi : E \rightarrow X$ is continuous and surjective. $(E, X, \pi)$ is a vector bundle if the following conditions are satisfied:\\
(i) $\forall x \in X$, $\pi^{-1}(\lbrace x \rbrace)$ is endowed with the structure of a finite dimensional vector space over some fixed field $\mathbb{F}$;\\
(ii) $\forall y \in X$, there is an open neighborhood $U \subseteq X$ of $y$, some $k \in \mathbb{N}$, and a homeomorphism $h: U \times \mathbb{F}^k \rightarrow \pi^{-1}(U)$ such that, $\forall x \in U$:
$$ (\pi \circ h)(x,z)=x \quad \forall z \in \mathbb{F}^k $$
and the map $s:\mathbb{F}^k \rightarrow \pi^{-1}(\lbrace x \rbrace)$ given by $z \xmapsto{s} h(x,z)$ is a linear isomorphism.
\end{definition}
The next notion is a really general one, and it includes as special cases many well known concepts (e.g. topological spaces (with the sheaf of continuous functions) and smooth manifolds (with the sheaf of smooth functions)):
\begin{definition}\label{Def:1.4}
Let $X$ be a topological space and let $\mathcal{O}_X$ be a sheaf of rings on $X$. Then, $(X, \mathcal{O}_X)$ is called a ringed space.\\
A locally ringed space is a ringed space such that all the stalks of its sheaf of rings are local rings.
\end{definition}
Thanks to ringed and locally ringed spaces, we can define schemes (see, for instance, [8-12]):
\begin{definition}\label{Def:1.5}
An affine scheme is a locally ringed space which is isomorphic to $\Spec(R)$ for some commutative ring $R$, where $\Spec(R)$ is the spectrum of $R$, i.e. the set of all prime ideals of $R$, endowed with the Zariski topology (the closed sets are the ones of the prime ideals containing $I$, for any ideal $I$ of $R$). This isomorphism has to be seen as an isomorphism of locally ringed spaces, because $\Spec(R)$ is also augmented with a structure sheaf which makes it a locally ringed space.\\
A scheme is a locally ringed space that can be covered by open sets $U_j$ such that each $U_j$ (viewed as a locally ringed space) is an affine scheme.
\end{definition}
We conclude by briefly describing structured spaces (see [1]):
\begin{definition}\label{Def:1.6}
A topological space $X$ is a structured space if for every $p \in X$ there is a neighborhood $U_p$ of $p$ which can be endowed with an algebraic structure. More precisely, a structured space is a couple $(X, f_s)$, where $f_s$ is the structure map which assigns to each \emph{fixed} neighborhood $U_p$ (we fix one of the possible neighborhoods and also its structure) a fixed algebraic structure $f_s(U_p)$.
\end{definition}
One of the reason why we introduced structured spaces is that many important spaces cannot be endowed \textit{globally} with some algebraic structure, but they may be endowed \textit{locally} with various local structures. This way, we can use algebraic tools to \emph{locally} study these spaces. Many examples can be found in Section 1 of [1].
\section{Categories of structured spaces}
We start constructing some categories of structured spaces. The idea is to consider as objects the structured spaces that have the same cardinality and the same local structures. More precisely, let $X$ and $Y$ be structured spaces. Consider any $_X U_p \in \mathcal{U}_X$. Suppose there is a fixed neighborhood in $Y$ (which can be indicated, after reordering if necessary, as $_Y U_p \in \mathcal{U}_Y$) which is endowed with the same algebraic structure of $_X U_p$, i.e. $f_s(_X U_p) \equiv f_s(_Y U_p)$ (recall the difference between '$=$' and '$\equiv$' when dealing with structure maps; see Section 2 in [1], at the end, for the definition of '$\equiv$'). We always want to deal with such situations; therefore, we will consider $X$ and $Y$ to be in the same category if there is a bijection from $\mathcal{U}_X$ to $\mathcal{U}_Y$ such that to each fixed neighborhood in $X$ there corresponds one and only one fixed neighborhood of $Y$ which is endowed with the same algebraic structures (i.e. their structure maps are equivalent under '$\equiv$'). We can say that $X$ and $Y$ have the same cardinality (even if they have an infinite number of fixed neighborhoods, the bijection can be used to mean 'same cardinality' (recall Cantor's work on cardinality of infinite sets)) and that they have the same local structures under the chosen bijection. Notice that it could be possible to have more than one such bijection: for instance, if two fixed neighborhoods of $X$ (and $Y$) are rings, it is clear that there are more possible bijections satisfiyng the above condition. We will thus have a dependence on the bijection, which does not change the spaces belonging to the same category; it will however change the morphisms, as we will now see. We can start to construct our category, which is indicated by $\textbf{Str}(h,f_s)$, where the objects are all the structured spaces for which, if $X$ and $Y$ are any of them, there is a bijection $h_p$ in the family of chosen bijections $h=(h_t)$ such that each fixed neighborhood of $X$ is sent to a fixed neighborhood of $Y$ (and viceversa, since we have a bijection) in such a way that the neighborhoods have an equivalent (under '$\equiv$') algebraic structure w.r.t. $f_s$ (of course, we can use the same $f_s$ up to equivalence '$\equiv$'; in general, they are different ('$\neq$'), but since this in not confusing, we will simply use $f_s$). Note that $X$ and $Y$ are thus considered to be structured spaces w.r.t. the same (up to equivalence) $f_s$, so it is more correct to write $(X,f_s)$ and $(Y,f_s)$. Thanks to the family of bijections $h$, we can write the following (after reordering) for all the spaces in \textbf{Str}$(h,f_s)$:
$$ ... \xleftrightarrow{h_j} U_p \xleftrightarrow{h_t} V_p \xleftrightarrow{h_r}  W_p \xleftrightarrow{h_i} ... $$
for all $p$, where $U_p$ is a fixed neighborhood in $X$, $V_p$ is a fixed neighborhood in $Y$, $W_p$ is a fixed neighborhood in $Z$, ...\\
We give a simple example (where we reorder the fixed neighborhoods and the family $h$ so that the indices are all the same):
$$ \text{ring} \, U_1 \, \xleftrightarrow{h_1} \, \text{ring} \, V_1 $$
$$ \text{unital magma} \, U_2 \, \xleftrightarrow{h_2} \, \text{unital magma} \, V_2 $$
$$ \text{topological group} \, U_3 \, \xleftrightarrow{h_3} \, \text{topologial group} \, V_3 $$
Now that we have defined the objects of the category $\textbf{Str}(h,f_s)$ for some $h$ and $f_s$, we need to define morphisms. We recall the definition of homomorphisms of structured spaces given in [1]. A homomorphism $f$ between two structured spaces $X$, $Y$ in $\textbf{Str}(h,f_s)$ is a collection of homomorphisms, where each homomorphism of this family has, as domain, a fixed neighborhood in $X$, and it is mapped to the corresponding (w.r.t. $h$) fixed neighborhood in $Y$. The family contains one and only one homomorphism with domain some fixed neighborhood (i.e. the family contains only one homomorphism for each correspondence, w.r.t. $h$, of fixed neighborhoods). The class of morphisms from $X$ to $Y$ (both in $\textbf{Str}(h,f_s)$), indicated by $\Hom(X,Y)$, is the set of all homomorphisms from $X$ to $Y$ (as defined above). The composition of two homomorphisms $f:X \rightarrow Y$, $g: Y \rightarrow Z$ is defined as the composition of each homomorphism in the family, that is, if $f=(f_p)$ and $g=(g_p)$, where we have already reordered $f$ and $g$ so that $U_p \xrightarrow{f_p} V_p \xrightarrow{g_p} W_p$, we have $g \circ f=(g_p \circ f_p)$. Here, as usual in this context (see, for instance, the categories \textbf{Grp}, \textbf{Ab}, ...), we use a more general definition of composition, where we only require $\Ima f_p \subseteq \Dom g_p = Y$ for all $p$, and we do not necessarily have the equality of the image and the domain. It only remains to check that what we have constructed is indeed a category:
\begin{proposition}\label{Prop:2.1} 
Consider some $h$ and $f_s$ as above. Then, $\textbf{Str}(h,f_s)$ is a category.
\end{proposition}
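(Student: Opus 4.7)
The plan is to verify the three defining axioms of a category: that composition of morphisms is a well-defined operation landing in the appropriate $\Hom$ class, that composition is associative, and that identity morphisms exist and satisfy the unit laws. Since the objects of $\textbf{Str}(h,f_s)$ and the morphism classes $\Hom(X,Y)$ have already been constructed in the discussion preceding the statement, only the behaviour of composition and the existence of identities remain to be checked.

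First I would handle identities. For each object $(X,f_s) \in \textbf{Str}(h,f_s)$, I define $\id_X := (\id_{U_p})_p$, where $\id_{U_p}$ is the identity map on each fixed neighborhood $U_p$ of $X$. This makes sense because the identity map on any algebraic structure is automatically a homomorphism of that structure, so $\id_X$ is a valid family of homomorphisms in the sense defined earlier. The bijection on fixed neighborhoods underlying $\id_X$ is simply the identity bijection on $\mathcal{U}_X$, which we include tacitly in the family $h$. The unit laws $f \circ \id_X = f$ and $\id_Y \circ f = f$ for $f \in \Hom(X,Y)$ then follow componentwise from the corresponding unit laws in whichever concrete category of algebraic structures each $f_s(U_p)$ belongs to.

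Next, composition. Given $f=(f_p) \in \Hom(X,Y)$ and $g=(g_p) \in \Hom(Y,Z)$, already reordered so that $U_p \xrightarrow{f_p} V_p \xrightarrow{g_p} W_p$ via the relevant bijections in $h$, I would check that $g \circ f := (g_p \circ f_p)$ is indeed a member of $\Hom(X,Z)$. Each $g_p \circ f_p$ is a composition of homomorphisms between $f_s$-equivalent algebraic structures, so it is again a homomorphism from $U_p$ to $W_p$, which is what the definition of $\Hom(X,Z)$ requires. Associativity then reduces, componentwise, to the identity $(h_p \circ g_p) \circ f_p = h_p \circ (g_p \circ f_p)$, which is nothing but the associativity of ordinary function composition (and of composition of homomorphisms in the concrete category of the structure on $U_p$).

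The step I expect to demand the most care is not any of these routine verifications but rather the bookkeeping associated with the family $h$ of bijections. To ensure that composition is genuinely internal to $\textbf{Str}(h,f_s)$, one must check that the bijection on fixed neighborhoods induced by composing $f$ (which uses some $h_t \in h$ relating $\mathcal{U}_X$ and $\mathcal{U}_Y$) with $g$ (which uses some $h_{t'} \in h$ relating $\mathcal{U}_Y$ and $\mathcal{U}_Z$) is again in $h$, and symmetrically that $h$ contains the identity bijection on every $\mathcal{U}_X$. In effect, one must assume, or build into the standing hypotheses on $h$, that $h$ is closed under composition and contains identities. Granting this, all the category axioms for $\textbf{Str}(h,f_s)$ then follow cleanly from the corresponding axioms in the underlying categories of algebraic structures hosting the sections $f_s(U_p)$.
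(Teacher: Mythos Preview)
Your proposal is correct and follows essentially the same approach as the paper: verify associativity and the identity laws componentwise, reducing everything to the corresponding properties for homomorphisms of the underlying algebraic structures. You are in fact slightly more careful than the paper, which checks only associativity and identities and does not explicitly verify that $g\circ f$ lands in $\Hom(X,Z)$ or address the closure/identity conditions on the family $h$ that you flag in your last paragraph.
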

\begin{proof}
We only have to verify that associativity holds, and that there is an identity element. Consider $f \circ (g \circ h)$. After reodering, we can write $f \circ (g \circ h) = (f_p \circ (g_p \circ h_p))$. But since these are all homomorphisms between the same kind of algebraic structure, we known that they are all equal to $((f_p \circ g_p) \circ h_p)=(f \circ g) \circ h$. Thus, associativity holds. Moreover, for every $X \in \textbf{Str}(h,f_s)$, we have a homomorphism $\id_X : X \rightarrow X$ in $\Hom(X,X)$ defined by $\id_X=(\id_{U_p})$ for all the fixed $U_p$'s in $X$, such that $\id_X \circ f =f$ and $g \circ \id_X = g$ whichever are $f: A \rightarrow X$, $g: X \rightarrow B$. This equality follows because the same equation clearly holds for each $\id_{U_p}$.
\end{proof}
There are some particular kinds of these categories that will be used to construct some cohomologies involving sheaves, namely $\textbf{Part}(h,f_s)$, which are categories of partitionable spaces (see [1]). More precisely, this time the objects are partitionable spaces (which are particular cases of structured spaces). The category is then constructed similarly as above.\\
We conclude this section defining (pre)sheaves with values in these categories. Actually, these are simply defined as in Section 1. We only notice that, for reasons that will be clear in the next section (namely, when we decompose a presheaf with values in \textbf{Str}$(h,\textbf{Ab})$ to some presheaves with values in \textbf{Ab}, we would have some problems with property 2) in the definition of functor (see Section 1); indeed, it would be possible to obtain something which is not anymore a functor after the decomposition, so we avoid this possibility as follows), when dealing with some structured cohomologies we will use sheaves with values in \textbf{Part}$(h,f_s)$, and not in \textbf{Str}$(h,f_s)$ (which will be considered later in this paper). This is why we have also defined these categories consisting of partitionable spaces.
\section{Structured sheaf and \v{C}ech cohomologies}
From now on, we will mainly follow these steps in each section:\\
$\bullet$ we briefly recall how the cohomologies we are dealing with are defined, and if necessary we define a generalisation of the concepts involved\\
$\bullet$ we generalise everything to the structured case, which usually consists in finding a way to decompose the considered object into its fixed neighborhoods and then apply the methods in Section 2 of [2] (Section 5 is the unique exception).\\
The idea behind the decomposition process is essentialy the same, so we will explicitely describe it only in this section. In the next ones we will consider the parts which differ from each other, otherwise it would be repetitive.\\
We start constructing structured sheaf cohomology. Recall that (see, for instance, [7] and [13-14]), given a sheaf $\mathcal{F}$ with values in \textbf{Ab} (or given a presheaf of abelian groups that we immediately sheafify), we can consider some injective resolution (this is possible because the category of sheaves of abelian groups is abelian and has enough injectives):
$$ 0 \rightarrow \mathcal{F} \rightarrow \mathcal{I}_0 \, \rightarrow \mathcal{I}_1 \, \rightarrow ...$$
Then, sheaf cohomology groups are defined as the usual cohomology groups of the following cochain complex:
$$ 0 \rightarrow \mathcal{I}_0(X) \rightarrow \mathcal{I}_1(X) \rightarrow \mathcal{I}_2(X) \rightarrow ... $$
where we have applied the functor $\Gamma(X, \cdot)$ (the coboundary maps are obtained by applying this functor to the maps in the resolution above). The cohomology groups are the right derived functors of $\Gamma(X, \cdot)$, and they are independent of the chosen resolution.\\
Now consider a presheaf $\mathcal{F}:X \rightarrow \textbf{Part}(h,\textbf{Ab})$, where $\textbf{Ab}$ in place of $f_s$ means that all the fixed neighborhoods are abelian groups. We can decompose \footnote{Notice that, here, when we say 'decomposition' we do not mean that their union is the former object: we only mean that from a certain object we can obtain some other objects by considering "some parts of it" in such a way that we completely describe, from that point of view, the object via such parts.} each $\mathcal{F}(U)$, which is a structured space, into its fixed neighborhoods, say $(\mathcal{F}(U))_p$, which are abelian groups, as said above. Define $\mathcal{F}_p (U)$, for each open $U$ in $X$, to be $(\mathcal{F}(U))_p$. Since all the fixed neighborhoods are abelian groups, $\mathcal{F}_p$ has values in \textbf{Ab}. We wonder whether it is a presheaf or not. It is clear that to each $U$ we assign $(\mathcal{F}(U))_p$, and to each inclusion $U \subseteq V$ we assign the map (reorder the $p$'s using $h$: if two fixed neighborhoods of two structured spaces are associated via $h$, then they must be indicated with the same $p$, and viceversa, if two fixed neighborhoods are denoted by the same $p$, they must be related by $h$; this will always be the meaning of 'reordering' when 'decomposing' some object) $f_p : \mathcal{F}_p(V) \rightarrow \mathcal{F}_p(U)$, which is the homomorphism $f_p$ (after an obvious reordering) in the family $f=(f_t)$, where $f$ is the homomorphism assigned to $U \subseteq V$ via $\mathcal{F}$. We only have to verify that $\mathcal{F}_p$ are functors. It is clear that $\rho^p _{U,U} = \id_{\mathcal{F}_p(U)}$, because this was true with $\mathcal{F}$ by definition of presheaf. Moreover, it is not difficult to see that, if $U \subseteq V \subseteq W$, $\rho^p _{U,V} \circ \rho^p _{V,W} = \rho^p _{U,W}$. Notice that this is always true because by assumption the spaces are partitionable; otherwise, this would not be necessarily true. Also note that the fact that we use partitionable spaces is not so restrictive, from some points of views: for instance, recall all the partitionable spaces defined in Example 1.3 of [1] (those ones are not all abelian groups, but it is clear that there are many interesting example satisfying this property). Now that we have verified that the $\mathcal{F}_p$'s are presheaves, we can sheafify all of them and obtain the sheaves $\widetilde{\mathcal{F}_p}$. We can construct the following square/rectangle (depending on the number of $p$'s, which is assumed to be at most countable infinite) using the cochains of the sheaf cohomologies of each $\widetilde{\mathcal{F}_p}$:
$$0 \rightarrow \, _1 \mathcal{I}_0(X) \xrightarrow{ _0 \partial^0} \, _1 \mathcal{I}_1(X) \xrightarrow{ _0 \partial^1} \, _1 \mathcal{I}_2(X) \xrightarrow{ _0 \partial^2} ...$$
$$\downarrow \, _0 \widetilde{\partial}^0 \qquad \quad \downarrow \, _0 \widetilde{\partial}^1  \qquad  \, \, \downarrow \,_0 \widetilde{\partial}^2 $$
$$0 \rightarrow \, _2 \mathcal{I}_0(X) \xrightarrow{_1 \partial^0} \, _2 \mathcal{I}_1(X) \xrightarrow{_1 \partial^1} \, _2 \mathcal{I}_2(X) \xrightarrow{_1 \partial^2} ...$$
$$\downarrow \, _1 \widetilde{\partial}^0  \qquad \quad \downarrow \, _1 \widetilde{\partial}^1 \qquad \, \, \downarrow \,_1 \widetilde{\partial}^2 $$
$$0 \rightarrow \, _3 \mathcal{I}_0(X) \xrightarrow{_2 \partial^0} \, _3 \mathcal{I}_1(X) \xrightarrow{_2 \partial^1} \, _3 \mathcal{I}_2(X) \xrightarrow{_2 \partial^2} ...$$
$$\downarrow \, _2 \widetilde{\partial}^0 \qquad \quad \downarrow \, _2 \widetilde{\partial}^1 \qquad \, \, \downarrow \,_2 \widetilde{\partial}^2 $$
$$\, ... \qquad \qquad \quad  \, ... \qquad \qquad  \, ...  \qquad $$
where $_n \partial^p$ are given by the sheaf cohomologies, while  $_n \widetilde{\partial}^p$ are a chosen family of vertical homomorphisms. As explained in [2], we can choose any such family, only requiring it to satisfy some conditions (see Theorem 2.1 and Theorem 2.2 in [2]); it is always possible to choose the trivial family, where all the vertical homomorphisms are $\equiv 0$ (as discussed in [2], this does not cause problems to the cohomology theory, which is still interesting to study). Then, we can consider their square/rectangular cohomology, or, if the square/rectangle above turns out to be a double complex, we can also consider the cohomology groups $H^{p,q}$ of the double complex, or the cohomology groups of the total complex obtained from the double complex. All these possibilities lead to some cohomology theory for the starting presheaf $\mathcal{F}:X \rightarrow \textbf{Str}(h,\textbf{Ab})$. They will be all called 'structured sheaf cohomologies'. An important question now is: are all these cohomologies independent of the chosen resolutions? For the cohomology groups arising from the horizontal lines, this is clearly true, because the cohomology groups will be the ones of the sheaf cohomology of some $\widetilde{\mathcal{F}_p}$; the problem is when we have the vertical homomorphisms. Unfortunately, it is not always true that the cohomology groups that come out also from the vertical homomorphisms are independent of the resoultions: for instance, if we have the trivial family, it is clear (see also Proposition 2.1 in [2]) that some cohomology groups could be equal to some $_n \mathcal{I}_p(X)$ or to some kernel of the horizontal homomorphisms, which generally depend on the resolution. However, we can avoid this problem as follows. We consider the case where all the vertical homomorphisms are trivial and we use square/rectangular cohomology; other cases can be treated similarly. We first recall the following construction of injective resolutions for objects in a category with enough injectives (as, for instance, the category of sheaves of abelian groups):\\
1) Let $X$ be any object (in our case, a sheaf) in the category. We first take some injective sheaf $_0 J_1$ and a monomorphism from $X$ to this injective object. We know that such an injetive object exists by definition of category with enough injectives.\\
2) Suppose that we have another (different) injective sheaf $_0 J_2$ such that $_0 J_1(Y) \subseteq \, _0 J_2(Y)$ (where $Y$ is the domain of the sheaf $X$ and of all the other sheaves considered here), with a monomorphism from $X$ to $_0 J_2$. If it does not exist, there is no problem: consider only the first one, and in what follows ignore the steps for $_0 J_2$.\\
3) Both these injective sheaves are such that there is, for each of them, an injective sheaf $_1 J_1$ ($_1 J_2$, respectively) with monomorphism from the previous object to this one. Again, suppose that there is (for each of them) another injective sheaf $_1 \widehat{J}_1$ ($_1 \widehat{J}2$, respectively) such that $_1 J_1(Y) \subseteq \, _1 \widehat{J}_1(Y)$ (and similarly for the other one). Continue this way, and obtain (these are only the first terms):
$$\underline{_1 J_1}$$
$$ \nearrow \qquad \quad$$
$$\underline{_0 J_1} \rightarrow \, _1 \widehat{J}_1 \qquad \qquad$$
$$ \nearrow \qquad \qquad \qquad \qquad$$
$$X \qquad \qquad \qquad \qquad \qquad \qquad$$
$$ \searrow \qquad \qquad \qquad \qquad \qquad$$
$$\underline{_0 J_2} \rightarrow \, _1 J_2 \qquad \qquad \qquad$$
$$\searrow \qquad \qquad \qquad$$
$$\underline{_1 \widehat{J}_2} \qquad $$
and so on. It is then clear that all the sheaves on the diagonal starting from $X$ and going down contains (after applying the functor $\Gamma(Y, \cdot)$) the corresponding elements on the diagonal starting from $X$ and going up (the elements of these diagonals are underlined). It is also reasonable to assume that the kernel of a map from an element of some injective resolution and the next element of that resolution is contained in the kernel of the corresponding map in the other resolution where all the elements contain the objects of the first one. This suggest the following definition ($_j d^p$ is the $p$-th coboundary map in the resolution $\mathcal{I}^j$):
\begin{definition}\label{Def:3.1}
Given two injective resolutions of a sheaf of abelian groups, say $\mathcal{I}^1=\mathcal{I}^1 _{\bullet}$ and $\mathcal{I}^2=\mathcal{I}^2 _{\bullet}$, we define $\prec$ as follows:
$$\mathcal{I}^1 \prec \mathcal{I}^2 \Leftrightarrow \mathcal{I}^1 _{p}(Y) \subseteq \mathcal{I}^2 _{p}(Y) \, \text{and} \, \ker \, _i d^p \subseteq \ker \, _j d^p, \, \, \forall p$$
\end{definition}
It is clear that $\prec$ is a preorder, and hence the collection $I:=\lbrace \, _n \mathcal{I}^j \rbrace$  ($n$ indicates that we are dealing with $\widetilde{\mathcal{F}}_n$) is a directed set. We want to take the direct limit of the cohomology groups of the structured sheaf cohomology to "refine" them. This way, they will be independent of the chosen resolution. There are three kinds of cohomology groups arising when we use the trivial family of vertical homomorphisms:\\
(i) the sheaf cohomology groups of some $\widetilde{\mathcal{F}}_p$\\
(ii) $_n \mathcal{I}^j _p(Y)$ (the $p$-th term of some resolution $_n \mathcal{I}^j$ evaluated at $Y$)\\
(iii) $\ker \, _i \partial(p,j)$, where $_i \partial(p,j)$ indicates some horizontal coboundary map\\
(for the last two groups, see Proposition 2.1 in [2], and adapt it to the above case). In each of the above cases we want to find a direct system over $I$. We start with (i): the collection $ \lbrace H^p(_n \mathcal{I}^j) \rbrace_I$ ($n$ and $p$ are fixed) is the collection of the $p$-th sheaf cohomology groups w.r.t. $\widetilde{\mathcal{F}}_n$ and w.r.t. some resolution $_n \mathcal{I}^j$. Actually, we know that these groups are independent of the chosen resolution, so there is only one element in this collection. We can thus define a homomorphism $f_{i,j}:H^p(_n \mathcal{I}^i) \rightarrow H^p(_n \mathcal{I}^j)$ as the identity homomorphism $\id_{H^p(_n \mathcal{I}^i)}$. This clearly satisfies all the necessary properties, and thus we have a direct system. In case (ii), we have to find a homomorphism $f_{i,j}: \, _n \mathcal{I}^i _p(Y) \rightarrow \, _n \mathcal{I}^j _p(Y)$ ($n$ and $p$ fixed) for all $_n \mathcal{I}^i \prec \, _n \mathcal{I}^j$. But we know by definition that $_n \mathcal{I}^i _p(Y) \subseteq \, _n \mathcal{I}^j _p(Y)$, so we can consider the inclusion homomorphism  which assigns to each element of the first group the same element in the bigger group. It is again clear that the needed properties are satisfied (recall that we have defined composition so that we only need to have the image \textit{contained} in the domain, and not necessary equal to the domain). Thus, we have a direct system. It only remains to find homomorphisms for (iii). Since we have assumed that the kernels in a resolution are contained in the corresponding kernels of the resolution on the right of the sign $\prec$, it is clear that we can again take the inclusion map as in (ii). Thus, we have direct systems in each case, and therefore we can take the direct limits to obtain the "refined" cohomology groups. It is clear that the direct limit of a group in (i) is the same as the groups because of independence. Using this method, we have arrived at a (square/rectangular) cohomology theory which does not depend on the chosen resolution. We can summarise everything in the following important result:
\begin{theorem}[Structured sheaf cohomology]\label{Thm:3.1}
Consider a presheaf $\mathcal{F}:X \rightarrow \textbf{Part}(h,\textbf{Ab})$. We can decompose this presheaf into $\mathcal{F}_p:X\rightarrow \textbf{Ab}$, which are presheaves of abelian groups. Sheafify them and obtain the sheaves $\widetilde{\mathcal{F}_p}$. Consider some cochains of their sheaf cohomologies (w.r.t. some resolutions) and construct a square/rectangle as usual (see [2]). We can choose any possible kind of vertical homomorphisms (for instance, the trivial family) so that we have a square/rectangular cohomology, or, if the square/rectangle turns out to be a double complex, also the cohomology of the double complex or the cohomology of the corresponding total complex. In any of these cases, we have a structured sheaf cohomology. We can refine it (obtaining the independence from the resolutions) by taking the direct limits of the cohomology groups $H^p(X, \mathcal{F})$:
$$ \widetilde{H}^p (X, \mathcal{F}):= \lim_{\longrightarrow_{I}} H^p(X, \mathcal{F}) $$
where $I$ is a directed set under $\prec$, which is defined accordingly to the situation (for instance, see Definition \ref{Def:3.1} for the particular case considered there).
\end{theorem}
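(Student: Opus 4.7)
The plan is to reduce the statement to three tasks: verifying that the decomposition $\mathcal{F} \mapsto (\mathcal{F}_p)$ produces presheaves of abelian groups, assembling the square/rectangle and reading off the candidate cohomology groups, and finally checking that the direct-limit construction really removes the dependence on the chosen resolutions.

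For the first task, almost all of the verification has already been carried out in the discussion preceding the statement: because the values of $\mathcal{F}$ lie in $\textbf{Part}(h,\textbf{Ab})$, each object $\mathcal{F}(U)$ decomposes into its fixed abelian-group neighborhoods, and the reordering convention guarantees that the restriction morphisms split compatibly into families $(\rho^p_{U,V})$. The identity and composition axioms of a functor then transport from $\mathcal{F}$ to each $\mathcal{F}_p$, using partitionability to ensure no relation collapses. I would then sheafify via the \'etale space construction of Section 1 to obtain $\widetilde{\mathcal{F}_p}$, choose an injective resolution ${}_n \mathcal{I}_{\bullet}$ for each $\widetilde{\mathcal{F}_n}$ (possible because sheaves of abelian groups have enough injectives), apply $\Gamma(X,\cdot)$ to obtain the horizontal rows, and pick any vertical family (for instance the trivial one) satisfying the compatibility conditions of Theorems 2.1--2.2 of [2]. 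The square/rectangular cohomology, the double-complex cohomology $H^{p,q}$ when the diagram is a double complex, and the cohomology of the associated total complex are then all well-defined by the general machinery of [2], and each such choice yields a structured sheaf cohomology of $\mathcal{F}$.

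The core of the argument is the refinement. First I would verify that $\prec$ from Definition \ref{Def:3.1} is reflexive and transitive, so that $I$ is a preorder, and then that $I$ is directed: given two resolutions ${}_n \mathcal{I}^i$ and ${}_n \mathcal{I}^j$, I would build a common upper bound by iterating the enough-injectives property, at each step choosing an injective sheaf whose sections contain (after $\Gamma(X,\cdot)$) both ${}_n \mathcal{I}^i_p(X)$ and ${}_n \mathcal{I}^j_p(X)$ and whose coboundary kernel contains both of theirs, as sketched in the diagram preceding Definition \ref{Def:3.1}. With $I$ directed, I would treat in turn each of the three types of groups appearing in the square under the trivial vertical family. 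For type (i) the transition maps are identities and the direct limit reproduces the already resolution-independent sheaf cohomology; for types (ii) and (iii) the transition maps are the inclusions guaranteed by $\prec$, so that $f_{i,i} = \id$ and $f_{j,k} \circ f_{i,j} = f_{i,k}$ whenever $i \prec j \prec k$, giving direct systems in the sense of Section 1. Passing to the direct limit over $I$ then yields groups $\widetilde{H}^p(X, \mathcal{F})$ independent of the resolution, and the non-trivial vertical family, double-complex, and total-complex variants are handled by an identical argument applied to the relevant cohomology groups.

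The main obstacle I anticipate is the directedness of $I$: given arbitrary injective resolutions, constructing a third resolution that dominates both in both the term-by-term inclusion \emph{and} the kernel-by-kernel inclusion required by $\prec$ is not a matter of abstract nonsense alone. It demands explicit choices at each stage in the enough-injectives construction, compatible with the already-chosen coboundary maps, in such a way that the containment of kernels propagates up the resolution. Once this is in hand, the rest of the argument is routine bookkeeping, and the defining formula for $\widetilde{H}^p(X,\mathcal{F})$ in the theorem statement falls out of the universal property of the direct limit.
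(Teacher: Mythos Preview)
Your proposal is correct and follows essentially the same approach as the paper, which presents the entire argument as the discussion preceding the theorem statement rather than as a separate proof block. Your flagging of the directedness of $I$ as the main obstacle is in fact more careful than the paper itself, which simply asserts that $\prec$ being a preorder makes $I$ a directed set and relies on the informal tree-of-resolutions sketch for the existence of upper bounds.
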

Something similar can be done with \v{C}ech cohomology. Recall that \v{C}ech cohomology groups are defined for a sheaf $\mathcal{F}$ of abelian groups on $X$ and for an open cover $\mathcal{V}=(\mathcal{V}_i)_i$ as follows:
$$ C^p(X, \mathcal{F}):= \prod_{(i_1, ..., i_p) \in J^{p+1}} \mathcal{F}(\mathcal{V}_{i_0} \cap \mathcal{V}_{i_1} \cap ... \cap \mathcal{V}_{i_p})$$
The intersection in the argument of the presheaf is usually denoted by $V_{i_0 i_1 ... i_p}$. We refer to [7] and [13-14] for more on this cohomology. We can define a refinement for the covers, take the direct limit and then end up with "refined" \v{C}ech cohomology, which does not depend on the chosen open cover.\\
We construct structured \v{C}ech cohomology as before: take a presheaf $\mathcal{F}:X \rightarrow \textbf{Part}(h, \textbf{Ab})$ and decompose it in $\mathcal{F}_p: X \rightarrow \textbf{Ab}$, which are presheaves of abelian groups (the proof is the same as the one with structured sheaf cohomology). Sheafify all these presheaves and obtain the sheaves $\widetilde{\mathcal{F}_p}$. For each of these sheaves, consider some cover $\mathcal{V}_p$ and construct a square/rectangle as usual (see [2]). Then we can consider square/rectangular cohomology (or, if we have a double complex, we can also consider its cohomology or the cohomology of the total complex associated to it) and get 'structured \v{C}ech cohomology'. At the end, in order to have cohomology groups which are independent of the chosen covering, take the direct limits over the refinements of the open covers to obtain "refined" structured \v{C}ech cohomology (for the two cohomology groups arising from Proposition 2.1 in [2], we can proceed in a similar way to structured sheaf cohomology above). Thus, a result which is completely analogous to Theorem \ref{Thm:3.1} holds:
\begin{theorem}[Structured \v{C}ech cohomology]\label{Thm:3.2}
Consider a presheaf $\mathcal{F}:X \rightarrow \textbf{Part}(h,\textbf{Ab})$. We can decompose this presheaf into $\mathcal{F}_p:X\rightarrow \textbf{Ab}$, which are presheaves of abelian groups. Sheafify them and obtain the sheaves $\widetilde{\mathcal{F}_p}$. Consider some cochains of their \v{C}ech cohomologies and construct a square/rectangle as usual (see [2]). We can choose any possible kind of vertical homomorphisms (for instance, the trivial family) so that we have a square/rectangular cohomology, or, if the square/rectangle turns out to be a double complex, also the cohomology of the double complex or the cohomology of the corresponding total complex. In any of these cases, we have a structured \v{C}ech cohomology. We can refine it by taking the direct limits of the cohomology groups $H^p(\mathcal{V}, \mathcal{F})$:
$$ \widetilde{H}^p (X, \mathcal{F}):= \lim_{\longrightarrow_{\mathcal{V}}} H^p(\mathcal{V}, \mathcal{F}) $$
where the limit is taken over the refinements of the open covers.
\end{theorem}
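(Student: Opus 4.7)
The plan is to mimic the argument that precedes Theorem \ref{Thm:3.1} step by step, replacing injective resolutions (and the preorder $\prec$ of Definition \ref{Def:3.1}) by open covers and the standard refinement preorder. First I would reproduce the decomposition: given $\mathcal{F}:X \rightarrow \textbf{Part}(h,\textbf{Ab})$, set $\mathcal{F}_p(U):=(\mathcal{F}(U))_p$ and assign to each inclusion $U \subseteq V$ the $p$-component (under $h$) of the homomorphism $\mathcal{F}(V) \rightarrow \mathcal{F}(U)$. The verification that each $\mathcal{F}_p$ is a presheaf of abelian groups is identical to the one carried out before Theorem \ref{Thm:3.1}, and again uses the partitionability hypothesis to guarantee $\rho^p_{U,V} \circ \rho^p_{V,W} = \rho^p_{U,W}$. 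I would then sheafify each $\mathcal{F}_p$ to obtain $\widetilde{\mathcal{F}_p}$.

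Next, for each $\widetilde{\mathcal{F}_p}$ I would fix an open cover $\mathcal{V}_p$ of $X$ and form the standard \v{C}ech cochain complex $C^\bullet(\mathcal{V}_p,\widetilde{\mathcal{F}_p})$; arranging these horizontally (by cohomological degree) and stacking them vertically (by $p$) yields the square/rectangle required by Section 2 of [2], with horizontal coboundary maps $_n \partial^q$ the usual \v{C}ech differentials and vertical maps $_n \widetilde{\partial}^q$ any family satisfying the hypotheses of Theorem 2.1 and Theorem 2.2 in [2], for instance the trivial family $\equiv 0$. Applying the general construction in Section 2 of [2] then produces the square/rectangular cohomology and, whenever the diagram is a double complex, also the groups $H^{p,q}$ and the cohomology of the associated total complex; these are the structured \v{C}ech cohomology groups to be refined.

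Finally, to obtain independence from the chosen family of covers I would build direct systems indexed by the refinement preorder on open covers of $X$, which is directed. Whenever $\mathcal{V}'$ refines $\mathcal{V}$, the usual \v{C}ech restriction map furnishes chain maps $C^\bullet(\mathcal{V},\widetilde{\mathcal{F}_p}) \rightarrow C^\bullet(\mathcal{V}',\widetilde{\mathcal{F}_p})$ commuting with the horizontal differentials. This yields direct systems for all three types of groups arising when the vertical maps are trivial, exactly as in the discussion before Theorem \ref{Thm:3.1}: (i) the classical refinement maps on $H^q(\mathcal{V},\widetilde{\mathcal{F}_p})$, which in the limit give the usual refined \v{C}ech cohomology; (ii) the restriction maps on the cochain groups themselves; (iii) the restrictions of (ii) to the kernels $\ker \, _i \partial(q,\mathcal{V})$, well-defined because restriction commutes with the \v{C}ech differential. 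Passing to $\lim_{\longrightarrow_{\mathcal{V}}}$ in each case produces $\widetilde{H}^p(X,\mathcal{F})$ as claimed.

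The main obstacle I expect is verifying compatibility of the refinement maps with the chosen family of vertical homomorphisms, so that refinement acts on the entire square/rectangle and not merely on each individual row. For the trivial family this is automatic and the argument goes through verbatim; for more general admissible families it translates into an additional compatibility requirement on the $_n \widetilde{\partial}^q$, analogous to the one implicit in the structured sheaf setting. Modulo this point, the proof is purely formal and parallels Theorem \ref{Thm:3.1}.
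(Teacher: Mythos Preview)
Your proposal is correct and follows essentially the same route as the paper: decompose $\mathcal{F}$ into presheaves $\mathcal{F}_p$ via the partitionable structure, sheafify, build the square/rectangle from the \v{C}ech complexes of the $\widetilde{\mathcal{F}_p}$ with some admissible family of vertical maps, and then refine by passing to the direct limit over open covers, handling the groups of types (i)--(iii) just as in the discussion before Theorem~\ref{Thm:3.1}. The paper's argument is in fact slightly terser than yours (it simply points back to the sheaf-cohomology case and to Proposition~2.1 in [2]), and your remark on the compatibility of refinement maps with nontrivial vertical families is a valid caveat that the paper leaves implicit.
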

\section{Structured Hochschild cohomology}
In this section we define a generalisation of the concepts of ringed and locally ringed spaces, and then we analyse a structured version of Hochschild cohomology for these spaces. The idea is to have, similarly to the previous section, a sheaf with values in the category of partitionable spaces whose neighborhoods are rings/local rings. However, here we also present another possibility: a topological space which can be written as a union of ringed/locally ringed spaces. We can also construct Hochschild cohomology for structural schemes; we will do this at the end of the section.\\
We start with the former option. First of all, we recall the definition of Hochschild cohomology for ringed spaces \footnote{Hochschild (co)homology can be defined for various structures. Here we will use it in relation to ringed spaces and schemes; for other examples, see [30-33].}. We mainly refer to [16] (see also [17-29]). Consider a field $\mathbb{K}$ (actually, we could be more general and consider a commutative base ring). We define the Hochschild cohomology groups for a $\mathbb{K}$-linear category $\mathcal{A}$ as:
$$ C^p(\mathcal{A}):= \prod_{A_0, ..., A_p \in \Ob(\mathcal{A})} \Hom_{\mathbb{K}} (\mathcal{A}(A_{p-1}, A_p) \otimes_{\mathbb{K}} \cdot \cdot \cdot \otimes_{\mathbb{K}} \mathcal{A}(A_{0}, A_1), \mathcal{A}(A_{0}, A_p)) $$
Now consider a $\mathbb{K}$-linear abelian category $\mathcal{B}$. Then, we define:
$$ C^p _{ab}(\mathcal{B}) = C^p (\Inj \Ind (\mathcal{B})) $$
The Hochschild cohomology for $\mathbb{K}$-linear ringed spaces $(X, \mathcal{O}_X)$ (non-commutative ringed spaces are also allowed) is:
$$ C^p(X):= C^p _{ab} (\textbf{Mod}(X))$$
where \textbf{Mod}$(X)$ is the category of sheaves of right modules over $X$. Here, since this category turns out to have enough injectives, we have used Theorem 6.6 in [16] to obtain: $C^p _{ab} (\textbf{Mod}(X)) \cong C^p _{sh} (\Inj \textbf{Mod}(X))$. We will use this definition for the Hochschild complex. Many important and interesting considerations follow from this definition; in particular, under some assumptions these cohomology groups are isomorphic to some "apparently different" Hochschild cohomologies considered by other authors. We will not go deeper here, see Section 7 in [16] for more details. We notice that, even though not explicitely written above, $C^p(X)$ actually means $C^p(X, \mathcal{O}_X)$. Since we will need later to deal with different sheaves over the same space, we will specify them in order to avoid confusion.\\
We define the following generalised notion:
\begin{definition}\label{Def:4.1}
A structural space $(X,\mathcal{F})$ is a topological space $X$ together with a sheaf $\mathcal{F}:X \rightarrow \textbf{Str}(h,f_s)$.
\end{definition}
Clearly, since \textbf{Part}$(h,f_s)$ is a particular case of \textbf{Str}$(h,f_s)$, this definition also includes the following:
\begin{definition}\label{Def:4.2}
A structural ringed space is a structural space $(X,\mathcal{F})$ with $\mathcal{F}:X \rightarrow \textbf{Part}(h, \textbf{Ring})$ \footnote{For example, $\bigcup_{n=1}^{k} \mathbb{R}^n$, with $k$ possibly infinite, is a partitionable space whose fixed nieghborhoods can be all endowed with the structure of a ring (define both addition and multiplication component-wise: $(a_1, ..., a_n) + (b_1, ..., b_n):=(a_1+b_1, ..., a_n + b_n)$ and $(a_1, ..., a_n) \cdot (b_1, ..., b_n):=(a_1 b_1, ..., a_n b_n)$, which is quite common when dealing with the ring $\mathbb{F}^n$, for some field $\mathbb{F}$).}, where \textbf{Ring} is the category of rings.
\end{definition}
Actually, as previously said, the following definition is also natural:
\begin{definition}\label{Def:4.3}
A semi-structural ringed space is a topological space which is the union of some ringed spaces.
\end{definition}
Notice that we do not require a semi-structural space $X$ to be neither a structured nor a structural space. We will verify, however, that under some conditions we obtain the same Hochschild cohomologies from these two definitions.\\
Now consider a structural ringed space $(X,\mathcal{F})$. Decompose the sheaf $\mathcal{F}:X \rightarrow \textbf{Part}(h,\textbf{Ring})$ into presheaves $\mathcal{F}_p : X \rightarrow \textbf{Ring}$ (as in Section 3, but this time with \textbf{Ring}). Notice that $(X,\widetilde{\mathcal{F}_p})$ is a ringed space for each $p$ (the tilde indicates the sheafification, as usual). Then, assuming that the number of $p$'s is at most countable, we can form a square/rectangle with the Hochschild cohomologies of these spaces:
$$0 \rightarrow \, C^0(X,\widetilde{\mathcal{F}}_1) \xrightarrow{ _0 \partial^0} \, C^1(X,\widetilde{\mathcal{F}}_1) \xrightarrow{ _0 \partial^1} \, C^2(X,\widetilde{\mathcal{F}}_1) \xrightarrow{ _0 \partial^2} ...$$
$$\downarrow \, _0 \widetilde{\partial}^0 \qquad \quad \downarrow \, _0 \widetilde{\partial}^1  \qquad  \, \, \downarrow \,_0 \widetilde{\partial}^2 $$
$$0 \rightarrow \, C^0(X,\widetilde{\mathcal{F}}_2) \xrightarrow{_1 \partial^0} \, C^1(X,\widetilde{\mathcal{F}}_2) \xrightarrow{_1 \partial^1} \, C^2(X,\widetilde{\mathcal{F}}_2) \xrightarrow{_1 \partial^2} ...$$
$$\downarrow \, _1 \widetilde{\partial}^0  \qquad \quad \downarrow \, _1 \widetilde{\partial}^1 \qquad \, \, \downarrow \,_1 \widetilde{\partial}^2 $$
$$0 \rightarrow \, C^0(X,\widetilde{\mathcal{F}}_3) \xrightarrow{_2 \partial^0} \, C^1(X,\widetilde{\mathcal{F}}_3) \xrightarrow{_2 \partial^1} \, C^2(X,\widetilde{\mathcal{F}}_3) \xrightarrow{_2 \partial^2} ...$$
$$\downarrow \, _2 \widetilde{\partial}^0 \qquad \quad \downarrow \, _2 \widetilde{\partial}^1 \qquad \, \, \downarrow \,_2 \widetilde{\partial}^2 $$
$$\, ... \qquad \qquad \quad  \, ... \qquad \qquad  \, ...  \qquad $$
where $_n \partial^p$ are the usual coboundary maps of Hochschild cohomology, and the vertical homomorphisms can be taken, for instance, to be the trivial ones (or, more generally, any kind of family satisfying the necessary conditions, see Theorem 2.1, Theorem 2.2 and Remark 2.1 in [2]). This cohomology is called 'structured Hochschild cohomology (for structural ringed spaces)'. Another kind of cohomology theory similar to this one is 'structured Hochschild cohomology for semi-structural ringed spaces'. Suppose that $X$ is such a space, and that it can be written as an at most countable union of ringed spaces $(X_p, \widetilde{\mathcal{F}}_p)$. Then, we can form a square/rectangle with the Hochschild cohomologies of each of these spaces and proceed as usual. This cohomology generally depends on the chosen "decomposition" of $X$ as a union. The reason why we consider this second cohomology is mainly because it is related to the previous one. A semi-structural ringed space given by $X$ itself with many different sheaves on it is not the best way to deal with the situation, so we consider the disjoint union of the ringed spaces $(X,\widetilde{\mathcal{F}}_p)$ obtained from the structured Hochschild cohomology for structural ringed spaces:
\begin{equation}\label{Eq:4.1}
\widetilde{X}:= \bigsqcup_p \, (X,\widetilde{\mathcal{F}}_p):= \bigcup_p \, \lbrace (x,p), x \in X \rbrace 
\end{equation}
The spaces $\lbrace (x,p), x \in X \rbrace = X \times \lbrace p \rbrace$ together with $\widetilde{\mathcal{F}}_p$ can still be seen as ringed spaces, for each $p$ (we use the same notation for the sheaf of rings $\widetilde{\mathcal{F}}_p$ on $X$ and for the sheaf of rings on $\lbrace (x,p), x \in X \rbrace$). Thus, the disjoint union above, denoted by $\widetilde{X}$, is a semi-structural ringed space obtained from the first cohomology introduced in this section. If we construct its Hochschild cohomology as semi-structural space, and the order of the $p$'s is the same as the previous cohomology, then we obtain precisely the same cohomology (up to the terms $p$ in the couples).\\
We now turn to schemes; our main references are [24-29]. Recall that a $\mathbb{K}$-scheme $X$ is a scheme together with a morphism from $X$ to $\Spec \mathbb{K}$; a $\mathbb{K}$-scheme is separated if the diagonal is a closed immersion; a $\mathbb{K}$-scheme is of finite type if there exists a finite affine covering $(X_i)_i$ such that the $\mathbb{K}$-algebras $\Gamma(X_i, \mathcal{O}_X)$ are generated by a finite number of elements. Following [25], given a separated $\mathbb{K}$-scheme $X$ of finite type, with $\mathbb{K}$ noetherian ring, the Hochschild cochain complex of $X$ with values in some $\mathcal{O}_X$-module $\mathcal{A}$ is $R \Hom_{\mathcal{O}_{X^2}} (\mathcal{O}_X, \mathcal{A})$ ($R$ indicates the right derived functor). It can be proved that, in some cases, this is equivalent to the definition given by Grothendieck and Loday; see Theorem 2.1 in [27]. All of this can be generalised thanks to structural schemes. Firstly, we roughly think of a structural affine scheme as a structural locally ringed spaces (i.e. a structural ringed space such that all the stalks of its sheaf are structured spaces whose fixed neighborhoods are all local rings) which can be decomposed, via the fixed neighborhoods, into affine schemes. We now make more precise this last part: we have a structural locally ringed space $(X,\mathcal{F})$, which is a structural ringed space (and so we can decompose it as before, obtaining ringed spaces $(X,\mathcal{F}_p)$), and whose stalks $\mathcal{F}_x = \lim_{\longrightarrow_{U \ni x}} \mathcal{F}(U)$ (see [1] for the definition of direct limits for structured spaces) are structured spaces with only local rings as fixed neighborhoods (and hence we can decompose them into local rings $(\mathcal{F}_x)_t$). Now, since direct limits of structured spaces are structured spaces (see Proposition 3.4 in [1]) and the fixed neighborhoods of our direct limit are still local rings (it is known that direct limits of families of local rings are local rings), it is clear that the stalks of $\mathcal{F}$ can be decomposed into stalks $(\mathcal{F}_x)_p$ of $\mathcal{F}_p$, where we can (and will) use \textit{the same} ordering of $p$ as before. Clearly, the spaces $(X,\mathcal{F}_p)$ are locally ringed spaces, because of the previous discussion. If each of these spaces is isomorphic to $\Spec(R_p)$ for some $R_p$, then these spaces are actually affine schemes, and we will call the space considered at the beginning a structural affine scheme. Now that we have made precise the idea above, we can sum up everything in:
\begin{definition}\label{Def:4.4}
A structural affine scheme is a structural locally ringed space such that, after being decomposed via the fixed neighborhoods, each of the locally ringed spaces obtained is isomorphic to $\Spec(R_p)$ for some $R_p$ (equivalently, such that each of the locally ringed spaces obtained is an affine scheme).
\end{definition}
We can now define structural schemes:
\begin{definition}\label{Def:4.5}
A structural scheme is a structural locally ringed space which can be covered by open sets that (when viewed as structural locally ringed spaces) are structural affine schemes.
\end{definition}
It is not difficult to define, via the decomposition into fixed neighborhoods, a notion of separated $\mathbb{K}$-structural scheme $X$ of finite type, with $\mathbb{K}$ a noetherian ring. Then, we can decompose this structural scheme into schemes via the fixed neighborhoods (to do this, consider a covering via structural affine schemes, assuming that all these spaces are such that their sheaves have values in the same category \textbf{Part}$(h, \textbf{Ring})$ (that is, with the same $h$); then, for each $p$, consider the $p$-th affine scheme obtained by decomposing some structural affine scheme in the covering (and decompose this way all the structural affine schemes in the covering). It is clear that the union (for each $p$ fixed) of all the $p$-th affine schemes is a scheme; this is the $p$-th scheme obtained by decomposing the structural scheme), and form the usual square/rectangle with their Hochschild cohomologies. The remaining part of the construction of the structured cohomology is by now well known. Notice that, if we had defined, as for ringed spaces, the notions of 'semi-structural affine scheme' and 'semi-structural scheme', we would have found out that the former is actually equivalent to the concept of scheme and also to the latter. This is why we considered only the above definition. We can summarise everything in:
\begin{theorem}[Structured Hochschild cohomology]\label{Thm:4.1}
Consider a structural ringed space $(X,\mathcal{F})$ (see Definition \ref{Def:4.1}). Decompose as usual the sheaf $\mathcal{F}:X \rightarrow \textbf{Part}(h, \textbf{Ring})$ into $\mathcal{F}_p : X \rightarrow \textbf{Ring}$, which are presheaves of rings that become sheaves after sheafification ($\widetilde{\mathcal{F}}_p$). This way, we obtain ringed spaces $(X,\widetilde{\mathcal{F}}_p)$. We can construct a square/rectangle with their Hochschild cohomologies as usual, and then consider square/rectangular cohomology (or, if the square/rectangle is actually a double complex, we can also consider its cohomology or the cohomology of the total complex derived from it). The cohomology theory obtained is equivalent, up to the terms $p$ in their couples, to the structured Hochschild cohomology for the semi-structural ringed space $\widetilde{X}$ defined by \eqref{Eq:4.1}, where we use the same ordering for the $p$'s of the previous cohomology.\\
Similarly, consider a separated $\mathbb{K}$-structural scheme of finite type (see Definition \ref{Def:4.5}), with $\mathbb{K}$ noetherian ring. This space can be decomposed, via the fixed neighborhoods, into schemes: construct their Hochschild cohomologies and form the square/rectangle as usual. We can then evaluate the square/rectangular (or, if we have a double complex, also the other two available) cohomologies and obtain a structured Hochschild cohomology for the structural scheme.
\end{theorem}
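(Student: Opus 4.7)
The plan is to verify each assertion in the theorem by reducing it to the decomposition mechanism of Section 3 combined with the general square/rectangle machinery of [2]. First I would handle the structural ringed space part: given $\mathcal{F}:X\to\textbf{Part}(h,\textbf{Ring})$, the decomposition into presheaves $\mathcal{F}_p:X\to\textbf{Ring}$ is word-for-word the one used for structured sheaf cohomology with $\textbf{Ab}$ replaced by $\textbf{Ring}$; the partitionability assumption is precisely what guarantees the transitivity $\rho^p_{U,V}\circ \rho^p_{V,W}=\rho^p_{U,W}$ after projecting to the $p$-th fixed neighborhood. Sheafifying yields ringed spaces $(X,\widetilde{\mathcal{F}}_p)$, so the Hochschild cochain functor $C^n(\cdot)$ of [16] applies to each. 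Assembling the rows $(C^n(X,\widetilde{\mathcal{F}}_p))_n$ into a rectangle and selecting any admissible family of vertical homomorphisms satisfying the conditions of Theorems 2.1 and 2.2 of [2] yields a well-defined square/rectangular cohomology; when the rectangle happens to be a double complex we also read off $H^{p,q}$ and the cohomology of the total complex in the standard way.

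Second, to establish equivalence with the semi-structural Hochschild cohomology of $\widetilde{X}=\bigsqcup_p(X,\widetilde{\mathcal{F}}_p)$, I would exploit that each component $X\times\{p\}$ of $\widetilde{X}$ is homeomorphic to $X$ via the projection $(x,p)\mapsto x$, and the sheaf transported along this homeomorphism is exactly $\widetilde{\mathcal{F}}_p$. Therefore $\textbf{Mod}(X\times\{p\})\simeq \textbf{Mod}(X,\widetilde{\mathcal{F}}_p)$ and the Hochschild complexes $C^n(X\times\{p\},\widetilde{\mathcal{F}}_p)$ and $C^n(X,\widetilde{\mathcal{F}}_p)$ coincide term by term. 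Using the same ordering of the $p$'s and the same family of vertical maps in the two constructions, the two rectangles become literally equal up to the extra coordinate $p$ in the couples, and the three possible cohomologies (rectangular, double complex, total complex) consequently match; this step is essentially bookkeeping once the equivalence of module categories is recorded.

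Third, for the $\mathbb{K}$-structural scheme part, the decomposition step is where the real work sits, and it is the one place I expect a genuine obstacle. Given a cover of $X$ by structural affine schemes whose sheaves all land in the same $\textbf{Part}(h,\textbf{Ring})$, I would fix $p$ and extract from each structural affine $(U,\mathcal{F}|_U)$ the affine scheme $(U,\mathcal{F}_p|_U)\cong\Spec(R_p^U)$, using Definition \ref{Def:4.4}. The delicate point is to verify that these affine pieces glue along overlaps into a single honest scheme $(X,\mathcal{F}_p)$: this requires the bijections $h$ to be consistent across the cover, so that the "$p$-th slot" on two overlapping structural affines picks out the same fixed neighborhood on the intersection. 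Once this coherence is secured (which is exactly what the common $h$ assumption provides), separatedness and finite type descend to each $(X,\mathcal{F}_p)$ because the diagonal and the finite affine cover decompose index-by-index. The remainder of the argument --- forming $R\Hom_{\mathcal{O}_{X^2}}(\mathcal{O}_X,\mathcal{A})$ for each $(X,\mathcal{F}_p)$, arranging the cochain complexes into a rectangle, picking the vertical family in accordance with [2], and reading off the square/rectangular, double-complex, or total-complex cohomology --- is entirely parallel to the ringed space case and introduces no new obstacle.
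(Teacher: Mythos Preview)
Your proposal is correct and follows essentially the same approach as the paper: the theorem there is introduced as a summary of the preceding discussion in Section 4, and that discussion proceeds exactly as you outline---decompose $\mathcal{F}$ into the presheaves $\mathcal{F}_p$ as in Section 3 with \textbf{Ring} in place of \textbf{Ab}, sheafify, assemble the Hochschild complexes of $(X,\widetilde{\mathcal{F}}_p)$ into a square/rectangle and apply [2]; identify $X\times\{p\}$ with $X$ to match the semi-structural construction for $\widetilde{X}$; and for the structural scheme, decompose via a cover by structural affine schemes sharing the same $h$ and take the union over each fixed $p$ to obtain ordinary schemes. You add a bit more precision than the paper does (the equivalence $\textbf{Mod}(X\times\{p\})\simeq\textbf{Mod}(X,\widetilde{\mathcal{F}}_p)$ and the explicit gluing check on overlaps), but this is elaboration of the same argument rather than a different route.
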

\section{Structured topological K-theory}
We conclude this paper with a generalisation of vector bundles and the corresponding structured cohomology (to be precise, K-theory is a generalised cohomology, so we should say 'structured generalised cohomology'). We mainly refer to [39-46] and [50-55] for K-theory (topological, algebraic, ...), while we refer to [37-38] for spectral sequences and, in particular, for Atiyah-Hirzebruch spectral sequence, which turns out to be a useful tool to evaluate generalised cohomology theories.\\
Here we will use a decomposition method which is a bit different from the previous ones (we do not have sheaves as before), but the idea is always the same. We want to define structural vector bundles in such a way that they can be decomposed into fixed neighborhoods, as in the previous cases, in order to obtain vector bundles which allow us to develop a cohomology theory (this time, however, we will not use square/rectangle cohomology). The key to the "right" definition of structural vector bundles is to consider the homeomorphisms on the fixed neighborhoods (similarly to the isomorphisms in the definition of structural scheme), and not on the entire space. More precisely, begin with two topological spaces $E$ and $X$, and with a continuous surjection $\pi : E \rightarrow X$. Clearly, the fundamental condition here is that $\pi^{-1} ( \lbrace x \rbrace)$ must be a structured space with vector spaces as neighborhoods. To be precise, in this case we can require that $\pi^{-1} ( \lbrace x \rbrace) \in \textbf{Str}(h,\textbf{Vect}_{\mathbb{F}})$ $\forall x \in X$, where $h$ and the field $\mathbb{F}$ are fixed for all the points $x$ in $X$ (\textbf{Vect}$_{\mathbb{F}}$ is the category of vector spaces over $\mathbb{F}$). The fact that $h$ is the same for all $x$ allows us to decompose each $\pi^{-1} ( \lbrace x \rbrace)$ (after reordering) into fixed neighborhoods $(\pi^{-1} ( \lbrace x \rbrace))_p$ (which are nonempty). These are all vector spaces by definition of \textbf{Str}$(h,\textbf{Vect}_{\mathbb{F}})$. We assume, as usual, the the number of $p$'s is at most countably infinite. Suppose that, for each $x \in X$ and $\forall p$, there is some natural number $k$ (which could depend on $x$ and/or $p$), an open neighborhood $V$ of $x$ and a homeomorphism ($(\pi^{-1} ( V ))_p$ is just a notation for $\bigcup_{x \in V} (\pi^{-1} (\lbrace x \rbrace))_p$)
$$ \phi_p : V \times \mathbb{F}^k \rightarrow (\pi^{-1} ( V ))_p$$
such that
$$ (\pi_p \circ \phi_p) (y,v)=y, \, \forall v \in \mathbb{F}^k$$
and the map $v \mapsto \phi_p (y,v)$ is a linear isomorphism of the vector spaces $\mathbb{F}^k$ and $(\pi^{-1} ( \lbrace x \rbrace))_p$. Here, $\pi_p$ is the map defined as follows:
$$\pi_p \equiv \pi|_{E_p} : E_p \rightarrow X$$
where
\begin{equation}\label{Eq:5.1}
E_p:= \bigcup_{x \in X} (\pi^{-1} ( \lbrace x \rbrace))_p
\end{equation}
and as usual we use the definition of composition assuming that the image is \textit{contained}, and not necessarily equal, to the domain. Notice that $(\pi^{-1} (T))_p$ is the same as $\pi^{-1} _p (T)$. Moreover, it is not difficult to see, by definition of $E_p$ and $\pi_p$, that each $\pi_p$ is still a surjection. We need it to be also continuous: we require this in our definition (if it is not, we only need to change the topologies of $E$ and $X$ so that $\pi_p$ (and also $\pi$) are continuous). This way, the spaces $(E_p, X, \pi_p)$ are vector bundles, and thus the "right" definition we were looking for is the following:
\begin{definition}\label{Def:5.1}
A structural vector bundle is a structure $(E,X, \pi)$, with $E$ and $X$ topological spaces and $\pi : E \rightarrow X$ continuous surjection, such that $\pi^{-1} (\lbrace x \rbrace) \in \textbf{Str}(h, \textbf{Vect}_{\mathbb{F}})$ ($h$ and the field $\mathbb{F}$ are fixed for all $x \in X$) and the following conditions are satisfied:\\
(i) After decomposing each $\pi^{-1} (\lbrace x \rbrace)$ into fixed neighborhoods $(\pi^{-1} (\lbrace x \rbrace))_p$, it holds true that: for each $x \in X$ and $\forall p$, there is a natural number $k$ (which may depend on $x$ and/or $p$), an open neighborhood $V$ of $x$ and a homeomorphism:
$$ \phi_p : V \times \mathbb{F}^k \rightarrow (\pi^{-1} ( V ))_p$$
such that 
$$ (\pi_p \circ \phi_p) (y,v)=y, \, \forall v \in \mathbb{F}^k$$
and the mapping $v \mapsto \phi_p (y,v)$ is a linear isomorphism of the vector spaces $\mathbb{F}^k$ and $(\pi^{-1} ( \lbrace x \rbrace))_p$. Here, $\pi_p \equiv \pi|_{E_p} : E_p \rightarrow X$, with $E_p$ as in \eqref{Eq:5.1}, and $(\pi^{-1} ( V ))_p$ is just a notation for $\bigcup_{x \in V} (\pi^{-1} (\lbrace x \rbrace))_p$;\\
(ii) $\forall p$, $\pi_p$ is continuous (changing the topology on $E$ and/or $X$, we can obtain the continuity of $\pi$ and all the $\pi_p$'s).
\end{definition}
\begin{remark}\label{Rm:5.1}
\normalfont Even though it is not required by this general definition, when dealing with structured topological K-theory we will consider some assumptions on the topological space $X$. For instance, as in the usual topological K-theory, $X$ is supposed to be compact and Hausdorff.
\end{remark}
As we have already seen above, given a structural vector bundle, we can decompose it into its fixed neighborhoods, which turn out to give rise to the vector bundles $(E_p, X, \pi_p)$. Now the idea is to first construct $K^0$ in a similar way to what we do in topological K-theory. Consider the space of all the isomorphisms of structural vector bundles over the same $X$ and w.r.t. the same $h$ and $\mathbb{F}$; indicate it with \textbf{StrVect}$(X, h, \mathbb{F})$. The Whitney sum '$\oplus$' of two structural vector bundles (both w.r.t. the same $h$ and the same field) is the structural vector bundle obtained by applying the usual Whitney sum of vector bundles to each of its components (given by the decomposition into fixed neighborhoods). It is then not difficult to see that \textbf{StrVect}$(X, h, \mathbb{F})$, w.r.t. the sum
\begin{equation}\label{Eq:5.2}
[E_1] + [E_2] := [E_1 \oplus E_2]
\end{equation}
is an abelian monoid: this can be checked similarly to the case of vector bundles. We only notice that the class $[0]$ (the identity element of \textbf{StrVect}$(X, h, \mathbb{F})$) is the class of the zero-dimensional trivial structural bundle, i.e. $\bigcup_p (X \times \mathbb{F}^0 \times \lbrace p \rbrace)$, which is clearly a structural vector bundle. Indeed, let $\pi$ be the projection onto the first term, i.e. $\pi(x,0,t):=x$. Then
$$\pi^{-1}(\lbrace x \rbrace)= \bigcup_t \lbrace (x,0,t) \rbrace$$
and hence we have:
$$(\pi^{-1}(\lbrace x \rbrace))_p= (\bigcup_t \lbrace (x,0,t) \rbrace)_p = \lbrace (x,0,p) \rbrace$$
We can define addition and multiplication as follows: $(x,0,t)+(x,0,t):=(x,0+0,t)=(x,0,t)$ and $\alpha (x,0,t):=(x, \alpha 0, t)=(x,0,t)$. Then, this is clearly a vector space, and thus (since the $p$'s are defined using $h$) $\pi^{-1}(\lbrace x \rbrace) \in \textbf{Str}(h, \textbf{Vect}_{\mathbb{F}})$ (\textit{only with structured topological K-theory, we \emph{allow} the possibilty of having as fixed neighborhoods some \emph{singleton} sets (in this case, all the sets have only one point, so they are all singleton); clearly, this adds some objects to the category} $\textbf{Str}(h, \textbf{Vect}_{\mathbb{F}})$ (\textit{and hence also to} \textbf{StrVect}$(X, h, \mathbb{F})$), \textit{which however will still be indicated in the same way (this should not cause confusion)}). It is not difficult to see that the map
$$\phi_p : V \times \mathbb{F}^0 \rightarrow (\pi^{-1}(V))_p = \bigcup_{x \in V} \lbrace (x,0,p) \rbrace$$
defined by:
$$\phi_p(y,0):=(y,0,p)$$
is a homeomorphism: indeed, since $p$ is fixed this is clearly a bijection, and changing (if necessary) the topology on the space we can obtain continuity. Furthermore, we clearly have
$$ (\pi_p \circ \phi_p)(y,0)=\pi_p(y,0,p)=y$$
and the map $0 \mapsto (x,0,p)$ is an isomorphism of the vector spaces $\mathbb{F}^0$ and $\lbrace (x,0,p) \rbrace$ (this follows from a well known result of linear algebra and from the fact that both these vector spaces are zero dimensional). Thus, $\bigcup_p (X \times \mathbb{F}^0 \times \lbrace p \rbrace)$ is a structural vector bundle, and it is also clear that it is the identity element w.r.t. $\oplus$.\\
Now we can apply the usual Grothendieck construction to obtain an abelian group. More precisely, we recall that, given an abelian monoid $M$ (say, under $\cdot$), we can construct an abelian group $M^2/ \sim$, where $\sim$ is defined as follows:
$$ (a_1,a_2) \sim (b_1,b_2) \Leftrightarrow \exists c \in M: a_1 \cdot b_2 \cdot c = a_2 \cdot b_1 \cdot c $$
The operation on $M^2/ \sim$ is defined componentwise:
$$ [(a_1,a_2)] + [(b_1,b_2)] := [(a_1 \cdot a_2, b_1 \cdot b_2)] $$
Returning to our case, we apply Grothendieck completion to \textbf{StrVect}$(X, h, \mathbb{F})$ and we obtain an abelian group, which is denoted by $K^0(X)$ and is the zeroth group of the structured topological K-theory. By using, as in the "usual" topological K-theory, the $n$-th reduced suspensions $\Sigma^n$, we can define the following spaces ($n \geq 0$):
\begin{equation}\label{Eq:5.3}
K^{-n} (X):= \widetilde{K}(\Sigma^n X_{+})
\end{equation}
(for the notation, we refer to [39]). A fundamental result, namely Bott Periodicity Theorem (see [47-49]), allows us (among the other things) to extend to positive integers the definition of the K-theory groups. We conjecture that a similar result still holds in the structured case, and that this indeed allows us to extend the above definition to $n < 0$. We summarise everything in the following:
\begin{theorem}[Structured Topological K-Theory]\label{Thm:5.1}
Let $(X, E, \pi)$ be a structural vector space (see Definition \ref{Def:5.1}). Let \textbf{StrVect}$(X, h, \mathbb{F})$ denote the space of the isomorphisms of structural vector bundles belonging to \textbf{Str}$(h,\textbf{Vect}_{\mathbb{F}})$, where we use the more general definition outlined above. Then, define Whitney sums of structural vector bundles on each component (obtained via the usual decomposition into fixed neighborhoods) and let '$+$' be the operation on \textbf{StrVect}$(X, h, \mathbb{F})$ as given by \eqref{Eq:5.2}. The space is then an abelian monoid, which can be turned into an abelian group using Grothendieck completion. The group obtained this way is the zeroth K-theory group, $K^0(X)$. The groups $K^{-n} (X)$ defined in \eqref{Eq:5.3} are the other ("negative") cohomology groups, defined via the $n$-th reduced suspensions.
\end{theorem}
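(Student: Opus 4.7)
The plan is to verify in order the three claims packaged into the statement: (a) that Whitney sum defined componentwise sends pairs in \textbf{StrVect}$(X,h,\mathbb{F})$ to \textbf{StrVect}$(X,h,\mathbb{F})$; (b) that this operation turns the set into an abelian monoid with identity the zero-dimensional trivial structural bundle (already checked in the excerpt); (c) that Grothendieck completion and suspension produce the group $K^0(X)$ and the higher groups $K^{-n}(X)$ exactly as stated. Throughout, the strategy is to reduce every claim to the corresponding fact for ordinary vector bundles by projecting to each fixed neighborhood index $p$ via the decomposition $(E_p, X, \pi_p)$ guaranteed by Definition~\ref{Def:5.1}, and then re-assembling the result across $p$'s using the common bijection $h$ furnished by the ambient category $\textbf{Str}(h,\textbf{Vect}_{\mathbb{F}})$.

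First I would check closure of $\oplus$. Given two structural vector bundles $(E^1,X,\pi^1)$ and $(E^2,X,\pi^2)$ with the same $h$ and $\mathbb{F}$, decompose each into $(E^1_p, X, \pi^1_p)$ and $(E^2_p, X, \pi^2_p)$, form the usual Whitney sum of vector bundles $(E^1_p \oplus E^2_p, X, \pi^1_p \oplus \pi^2_p)$ in each slot, and then set $E := \bigcup_p (E^1_p \oplus E^2_p)$ with the obvious projection $\pi$. One must verify that the fibre $\pi^{-1}(\{x\})$ carries a structured space structure in $\textbf{Str}(h,\textbf{Vect}_{\mathbb{F}})$: its fixed neighborhoods are, by construction, the vector spaces $(\pi^{1}_p)^{-1}(\{x\})\oplus (\pi^{2}_p)^{-1}(\{x\})$, indexed by the same $h$, so the category membership is immediate. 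The local triviality condition (i) of Definition~\ref{Def:5.1} follows by intersecting the trivializing neighborhoods of the two summand bundles at each $p$, taking the product of the local charts, and using the standard isomorphism $\mathbb{F}^{k_1}\times \mathbb{F}^{k_2}\cong \mathbb{F}^{k_1+k_2}$; continuity (ii) of the restricted projection is inherited from the component bundles after, if necessary, refining the topology on $E$ as permitted in Definition~\ref{Def:5.1}.

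Next I would establish the abelian monoid structure. Commutativity, associativity, and the identity law for \eqref{Eq:5.2} all reduce componentwise to the corresponding properties of Whitney sum of ordinary vector bundles: the natural isomorphism $(E^1_p \oplus E^2_p)\oplus E^3_p \cong E^1_p \oplus (E^2_p \oplus E^3_p)$ and the flip $E^1_p\oplus E^2_p \cong E^2_p \oplus E^1_p$ glue, via $h$, into isomorphisms of structural vector bundles. Well-definedness of $[E_1]+[E_2]$ on isomorphism classes follows because an isomorphism of structural vector bundles is (by the definition given just before \eqref{Eq:5.2}) a family of isomorphisms on each $E_p$, and these are preserved by $\oplus$. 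Combined with the verification of the identity element already performed in the excerpt, this gives the abelian monoid. Grothendieck completion then yields $K^0(X)$ by the recalled universal construction applied to $M=\textbf{StrVect}(X,h,\mathbb{F})$.

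For the negative part $K^{-n}(X)=\widetilde{K}(\Sigma^n X_+)$ of \eqref{Eq:5.3}, I would observe that all that is required is that $K^0$ has been defined on the category of pointed compact Hausdorff spaces (see Remark~\ref{Rm:5.1}) and that reduced suspension lives in this category; the structural content of the bundles is unaffected, so the definition is formally legitimate and gives abelian groups for every $n\geq 0$. The hard part, which I would flag rather than attempt here, is extending these to $n<0$: this requires a Bott-type periodicity for structural vector bundles, and as stated in the excerpt the authors only conjecture it. My proof would therefore stop at $n\geq 0$ and note the dependence on the conjectured structured Bott periodicity for the negative indices, the key obstacle being to promote the classical $\mathbb{F}$-fiberwise periodicity to one that respects the decomposition along $h$ simultaneously for all fixed neighborhoods $p$.
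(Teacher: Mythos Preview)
Your proposal is correct and follows essentially the same approach as the paper: the argument preceding Theorem~\ref{Thm:5.1} is not packaged in a formal proof environment but consists precisely of the componentwise reduction you describe --- decompose into the ordinary vector bundles $(E_p,X,\pi_p)$, invoke the classical Whitney-sum monoid structure slotwise, verify the zero-dimensional trivial structural bundle is the identity (done explicitly in the text), apply Grothendieck completion, and define $K^{-n}$ by suspension while leaving Bott periodicity as Conjecture~\ref{Conj:5.1}. Your treatment is in fact somewhat more explicit than the paper's on closure of $\oplus$ and on well-definedness on isomorphism classes, but the underlying strategy is identical.
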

\begin{conjecture}\label{Conj:5.1}
A similar result to Bott periodicity Theorem also holds in the structured case, and this allows us to extend the definition of K-theory cohomology groups to positive integers.
\end{conjecture}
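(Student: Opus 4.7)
The plan is to reduce the structured case to the classical one by exploiting the same decomposition into fixed neighborhoods that has been used throughout the paper, and then to invoke the classical Bott periodicity theorem componentwise. More precisely, given a structural vector bundle $(E,X,\pi)$, Definition \ref{Def:5.1} already yields a family of honest vector bundles $(E_p, X, \pi_p)$ indexed by an at most countable set of $p$'s, where $E_p$ is given by \eqref{Eq:5.1}. The first step would be to verify that the Whitney sum $\oplus$ defined componentwise in Theorem \ref{Thm:5.1} is compatible with the decomposition, i.e.\ that $(E_1 \oplus E_2)_p \cong (E_1)_p \oplus (E_2)_p$ as classical vector bundles. This is essentially built into the definition, and the same identification transfers the zero element (the trivial structural bundle whose $p$-component is $X \times \mathbb{F}^0$) to the zero of each classical $\mathrm{Vect}(X)$.

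Next, I would show that the monoid $\textbf{StrVect}(X,h,\mathbb{F})$ is isomorphic (as an abelian monoid under $\oplus$) to the product $\prod_p \mathrm{Vect}(X)$, where $\mathrm{Vect}(X)$ denotes the classical monoid of isomorphism classes of vector bundles over $X$. The point is that a structural vector bundle is uniquely specified up to isomorphism by its family of fixed-neighborhood components (once $h$ and $\mathbb{F}$ are fixed and the ordering of $p$'s is chosen), and morphisms in $\textbf{Str}(h,\textbf{Vect}_{\mathbb{F}})$ are families of classical linear isomorphisms. Since Grothendieck completion commutes with products of abelian monoids, this gives the identification
\[
K^0(X) \;\cong\; \prod_p K^0_{\mathrm{top}}(X),
\]
where $K^0_{\mathrm{top}}$ denotes the classical topological K-theory functor. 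The same argument, applied to $\Sigma^n X_+$ in place of $X$, identifies $K^{-n}(X)$ with $\prod_p K^{-n}_{\mathrm{top}}(X)$ for all $n \geq 0$ defined via \eqref{Eq:5.3}.

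With this identification, classical Bott periodicity transfers directly: in the complex case one obtains $K^{-n}(X) \cong K^{-n-2}(X)$ for every $n \geq 0$, simply by applying the classical periodicity isomorphism in each factor. This isomorphism then permits the usual extension to positive indices by setting $K^{n}(X) := K^{n-2k}(X)$ for any $k$ large enough that $n-2k \leq 0$; well-definedness follows because the periodicity isomorphism is natural and compatible with itself, and the product over $p$ preserves this naturality.

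The main obstacle I expect is \emph{not} the periodicity step itself (which is essentially formal once the decomposition is set up) but rather the precise verification that the Grothendieck construction commutes with the product $\prod_p$ when the indexing set is countably infinite, and that the reduced suspension $\Sigma^n X_+$ interacts cleanly with the decomposition of a structural bundle. For the former, one has to check that the equivalence relation defining the Grothendieck group respects the componentwise structure, which requires arguing that the stabilising element $c$ in the definition of $\sim$ can be taken componentwise; this is standard but must be written out carefully. For the latter, one should either verify that pulling back a structural vector bundle along a continuous map still yields a structural vector bundle (so that $\Sigma^n$-naturality makes sense) or else reformulate \eqref{Eq:5.3} directly in terms of the decomposed bundles $(E_p, X, \pi_p)$. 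Once these two technical points are handled, the conjecture follows as stated.
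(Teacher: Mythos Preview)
The paper does \emph{not} prove Conjecture \ref{Conj:5.1}; it is stated as an open conjecture, and the paper only offers a one-sentence suggestion for how to attack it: ``decompose into fixed neighborhoods and try to find some connections with the usual Bott Periodicity Theorem.'' Your proposal is precisely this suggested strategy, worked out in considerably more detail than anything the paper provides. In that sense you are not deviating from the paper's approach---you are carrying it further than the author did.

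That said, since you are attempting to \emph{prove} what the paper leaves open, the relevant question is whether your outline actually closes. Two of the gaps you flag yourself are real but manageable: Grothendieck completion does commute with arbitrary products of abelian monoids (both injectivity and surjectivity of the comparison map $K(\prod_p M_p)\to\prod_p K(M_p)$ are straightforward to check), so that concern dissolves once written out. The suspension issue is more delicate because the paper never defines the reduced functor $\widetilde{K}$ in the structured setting, so \eqref{Eq:5.3} is not fully specified; you would have to supply that definition before your componentwise argument can apply to it.

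There is a further gap you do not mention. Your key isomorphism $\textbf{StrVect}(X,h,\mathbb{F})\cong\prod_p\mathrm{Vect}(X)$ requires two things the paper never pins down: (i) a precise notion of isomorphism of structural vector bundles (the paper only speaks of ``the space of all the isomorphisms'' without saying what they are), and (ii) a reconstruction step showing that an arbitrary family $(E_p)_p$ of ordinary bundles over $X$ can be glued into a single structural bundle $(E,X,\pi)$ with the correct topology on $E$ and with $\pi$ continuous. Without (ii) you only get an injection of monoids, not an isomorphism, and the product description of $K^0$ would fail. These are not insurmountable, but they are genuine definitional choices that must be made before your argument goes through; as the paper stands, the objects are too loosely specified for the conjecture to be settled by your outline alone.
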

We suggest a possible attempt to approach Conjecture \ref{Conj:5.1}, which by now is not surprising: decompose into fixed neighborhoods and try to find some connections with the usual Bott Periodicity Theorem. As we have seen in this paper, this could be the "best" way to study this Conjecture.
\section{Conclusion}
In this paper we have continued the study of structured spaces begun in [1] and [2]. In particular, we have constructed some categories for structured spaces, and we have also extended to the 'structured case' various well known concepts. This has led to cohomology theories for the new objects which are somehow related to the corresponding "usual" ones. The main idea behind all these cohomologies is to decompose the new objects into their fixed neighborhoods and then apply the usual theories together with the results in [2]. The unique exception is structured topological K-theory, where we followed a similar construction to the one of topological K-theory, thus not using square/rectangular cohomology as in the previous cases. All these generalisations suggest the possibility to generalise also Bott Periodicity Theorem to the structured case, leading to an extension of the cohomology groups (see Conjecture \ref{Conj:5.1}). Moreover, we also note the possiblity to extend, using similar ideas, other well known notions and their (co)homologies to the structured case.\\
\\
\begin{large}
\textbf{References}
\end{large}
\\
$[1]$ Norman, M. (2020). On structured spaces and their properties. Preprint (arXiv:2003.09240)\\
$[2]$ Norman, M. (2020). Two cohomology theories for structured spaces. Preprint (arXiv:2004.11152)\\
$[3]$ Artin, M.; Grothendieck, A.; Verdier, Jean-Louis. eds. (1972). S\'eminaire de G\'eom\'etrie Alg\'ebrique du Bois Marie - 1963-64 - Th\'eorie des topos et cohomologie \'etale des sch\'emas - (SGA 4) - vol. 2. Lecture Notes in Mathematics (in French). 270. Berlin; New York: Springer-Verlag\\
$[4]$ Gl\"ockner, H. (2007). Direct limits of infinite-dimensional Lie groups compared to
direct limits in related categories. J. Funct. Anal. 245, 19-61\\
$[5]$ Gl\"ockner, H. (2011). Direct limits of infinite-dimensional Lie groups, pp. 243-280 in:
Neeb, K. - H.; Pianzola, A. (Eds.). "Developments and Trends in Infinite Dimensional Lie Theory", Progr. Math. 288, Birkh\"auser, Boston.\\
$[6]$ Gl\"ockner, H. (2019). Direct limits of regular Lie groups. Preprint (arXiv:1902.06329)\\
$[7]$ Gallier, J.; Quaintance, J. (2019). A Gentle Introduction to Homology, Cohomology, and
Sheaf Cohomology, University of Pennsylvania\\
$[8]$ Bosch, S. (2013). Affine Schemes and Basic Constructions. In: Algebraic Geometry and Commutative Algebra. Universitext. Springer, London\\
$[9]$ Nelson, P. (2009). An introduction to schemes. Lecture Notes. University of Chicago, Chicago\\
$[10]$ Ullery, B. (2009). An introduction to affine schemes. Lecture notes. University of Chiacago\\
$[11]$ Mumford, D. (1988). The Red Book of Varieties and Schemes, 2nd edition. Springer-Verlag Berlin Heidelberg\\
$[12]$ An, F. (2011). Affine structures on a ringed space and schemes. Chin. Ann. Math. Ser. B 32, 139-160\\
$[13]$ Mdzinarishvili, L.; Chechelashvili, L. (2015). \v{C}ech Cohomology with Coefficients in a Topological Abelian Group. J Math Sci 211, 40-57\\
$[14]$ Loday-Richaud M. (2016). Sheaves and \v{C}ech Cohomology with an Insight into Asymptotics. In: Divergent Series, Summability and Resurgence II. Lecture Notes in Mathematics, vol 2154. Springer, Cham\\
$[15]$ Godement, R. (1958). Topologie Alg\'ebrique et Th\'eorie des Faisceaux. Hermann, Paris\\
$[16]$ Lowen, W. T.; Van den Bergh, M. (2005). Hochschild cohomology of abelian categories and ringed
spaces, Adv. in Math. 198, no. 1, 172-222\\
$[17]$ Ayala, D.; Francis, J. (2015). Factorization homology of topological manifolds, Journal of Topology, Volume 8, Issue 4, 1045-1084\\
$[18]$  Hochschild, G. (1945). On the cohomology groups of an associative algebra. Ann. Math., 46(2): 58-67\\
$[19]$ Kr\"ahmer, U. (2007). Poincar\'e duality in Hochschild (co)homology. New Techniques in Hopf Algebras and Graded Ring Theory, K. Vlaam. Acad. Belgie Wet Kunsten, Brussels, pages 117-125\\
$[20]$ Wahl, N.; Westerland, C. (2015). Hochschild homology of structured algebras. Preprint (arXiv:1110.0651)\\
$[21]$ Keller, B. (1998). On the cyclic homology of ringed spaces and schemes, Doc. Math. 3, 231-259
(electronic)\\
$[22]$ Loday, J.-L. (1986). Cyclic homology, a survey, Geometric and algebraic topology, Banach Center
Publ., vol. 18, PWN, Warsaw, 281-303\\
$[23]$ Mitchell, B. (1972). Rings with several objects, Advances in Math. 8, 1-161\\
$[24]$ Weibel, C. (1996). Cyclic homology for schemes, Proc. Amer. Math. Soc. 124, no. 6, 1655-
1662\\
$[25]$ Yekutieli, A. (2002). The continuous Hochschild cochain complex of a scheme, Canad. J. Math. 54, no. 6, 1319-1337\\
$[26]$ Artin, M.; Grothendieck, A.; Verdier, J. L. (1963). Theorie des topos et cohomologie \'etale des
sch\'emas, SGA4, Tome 1, Lecture Notes in Mathematics, vol. 269, Springer Verlag\\
$[27]$ Swan, R. G. (1996). Hochschild cohomology of quasiprojective schemes. Journal of Pure and Applied Algebra, Volume 110, Issue 1, 57-80\\
$[28]$ Banerjee, A. (2011). A note on product structures on Hochschild homology of schemes. Colloquium Mathematicae 123.2: 233-238\\
$[29]$ Gerstenhaber, M.; Schack, S. (1988). Algebraic cohomology and deformation theory. In:
Deformation Theory of Algebras and Structures and Applications (Il Ciocco, 1986), NATO Adv. Sci. Inst. Ser. C Math. Phys. Sci. 247, Kluwer, Dordrecht, 11-264\\
$[30]$ Roitzheim, C.; Whitehouse, S. (2011). Uniqueness of $A_{\infty}$-structures and Hochschild cohomology. Algebr. Geom. Topol. 11, no. 1, 107-143\\
$[31]$ Ginot, G.; Tradler, T.; Zeinalian, M. (2014). Higher Hochschild homology, topological chiral homology and factorization algebras, Comm. Math. Phys. 326, no. 3, 635-686\\
$[32]$  Ginot, G.; Tradler, T.; Zeinalian, M. (2014). Higher Hochschild cohomology, Brane topology and
centralizers of $E_n$-algebra maps. Preprint (arXiv:1205.7056)\\
$[33]$ Tamarkin, D.; Tsygan, B. (2001). Cyclic Formality and Index Theorems. Letters in Mathematical Physics 56, 85-97\\
$[34]$ Wiegand, R.; Wiegand, S. (2010). Prime Ideals in Noetherian Rings: A Survey. In: Albu T., Birkenmeier G.F., Erdo\u{g}gan A., Tercan A. (eds) Ring and Module Theory. Trends in Mathematics. Springer, Basel\\
$[35]$ Wiegand, R.; Wiegand, S. (1976). The maximal ideal space of a Noetherian ring. J. Pure Appl. Algebra 8, 129-141\\
$[36]$ Heitmann, R. (1977). Prime ideal posets in Noetherian rings. Rocky Mountain J. Math. 7, 667-673\\
$[37]$ Husem\"oller, D.; Joachim, M.; Jur\v{c}o, B.; Schottenloher M. (2008). The Atiyah-Hirzebruch Spectral Sequence in K-Theory. In: Basic Bundle Theory and K-Cohomology Invariants. Lecture Notes in Physics, vol 726. Springer, Berlin, Heidelberg\\
$[38]$ McCleary, J. (2000). A User's Guide to Spectral Sequences (Cambridge Studies in Advanced Mathematics). Cambridge: Cambridge University Press\\
$[39]$ Friedlander, E. M. (2008). An introduction to K-theory. In: Some recent developments in
algebraic K-theory, volume 23 of ICTP Lect. Notes, pages 1-77. Abdus Salam Int. Cent. Theoret. Phys., Trieste\\
$[40]$ Rosenberg, J. (1994). Algebraic K-Theory and Its Applications. Graduate texts in Mathematics. Springer-Verlag, New York\\
$[41]$ Weibel, C. (2013). The K-book: an introduction to algebraic K-theory. Grad. Studies in Math. 145. American Math Society\\
$[42]$ Atiyah, M. F. (1989). K-theory. Advanced Book Classics (2nd ed.). Addison-Wesley\\
$[43]$ Karoubi, M. (1978). K-theory: an introduction. Classics in Mathematics. Springer-Verlag\\
$[44]$ Atiyah, M. F.; Hirzebruch, F. (1961). Vector bundles and homogeneous spaces, Proc. Sympos. Pure Math., 3, American Mathematical Society, 7-38\\
$[45]$ Quillen, D. (1975). Higher algebraic K-theory, Proc. Intern. Congress Math., Vancouver, 1974, vol. I, Canad. Math. Soc., 171-176\\
$[46]$ Swan, R. (1963). The Grothendieck ring of a finite group. Topology, 2, 85-110\\
$[47]$ Giffen, C. H. (1996). Bott periodicity and the Q-construction, Contemp. Math. 199, 107-124\\
$[48]$ Bott, Ra. (1956). An application of the Morse theory to the topology of Lie-groups. Bulletin de la Soci\'et\'e Math\'ematique de France, 84: 251-281\\
$[49]$ Bott, R. (1957). The stable homotopy of the classical groups. Proceedings of the National Academy of Sciences of the United States of America, 43 (10): 933-935\\
$[50]$ Gillet, H. (1992) Comparing algebraic and topological K-theory. In: Higher Algebraic K-Theory: an overview. Lecture Notes in Mathematics, vol 1491. Springer, Berlin, Heidelberg\\
$[51]$ Ausoni, C. (2010). On the algebraic K-theory of the complex K-theory spectrum. Invent. Math., 180(3): 611-668\\
$[52]$ Ausoni, C.; Rognes, J. (2002). Algebraic K-theory of topological K-theory. Acta Math., 188(1): 1-39\\
$[53]$ Ausoni, C. (2005). Topological Hochschild homology of connective complex K-theory. Am. J. Math. 127(6), 1261-1313\\
$[54]$ Blumberg, A. J.; Mandell, M. A. (2008). The localization sequence for the algebraic K-theory of topological K-theory. Acta Math., 200(2): 155-179\\
$[55]$ Krishna, A.; Ravi, C. (2017). Equivariant vector bundles, their derived category and K-theory on affine schemes. Annals of K-theory. Vol. 2, No. 2, 235-275\\
$[56]$ Bredon, G. E. (1997). Sheaf theory. Graduate Texts in Mathematics, 170 (2nd ed.), Berlin, New York: Springer-Verlag

\end{document}